\DeclareMathOperator{\PGL}{PGL}
\DeclareMathOperator{\Aut}{Aut}
\DeclareMathOperator{\GL}{GL}
\DeclareMathOperator{\Gal}{Gal}
\DeclareMathOperator{\norm}{Norm}
\newcommand{\Q}{{\mathbb Q}}
\newcommand{\F}{{\mathbb F}}
\newcommand{\cO}{\mathcal{O}}
\newcommand{\fq}{\mathfrak{q}}
\newcommand{\rhobar}{\overline{\rho}}
\newtheorem{theorem}{Theorem}[section]
\newtheorem{remark}[theorem]{Remark}
\newtheorem{proposition}[theorem]{Proposition}
\theoremstyle{definition}
\newtheorem*{assumption}{Assumptions}
\title[Irreducibility of Galois representations]{Irreducibility of mod $p$ Galois representations of elliptic curves with multiplicative reduction over number fields}
\begin{document}
	
	\author{Filip Najman}
	
	\address{Department of Mathematics, Faculty of Science, University of Zagreb\\
		Bijeni\v cka cesta 30\\
		10000 Zagreb\\
		Croatia}
	\email{fnajman@math.hr }
	\author{George C. \c Turca\c s}
	\address{``Simion Stoilow" Institute of Mathematics of the Romanian Academy\\
		014700 \\ Bucharest\\
		Romania}
	\address{Babe\c s-Bolyai University,
		Faculty of Mathematics and Computer Sciences\\
		1 Kog\u alniceanu Street\\
		400084, Cluj-Napoca\\
		Romania}
	\email{george.turcas@math.ubbcluj.ro}
	
	\date{\today}
	\thanks{Najman is supported by the QuantiXLie Centre of Excellence, a project
		co-financed by the Croatian Government and European Union through the
		European Regional Development Fund - the Competitiveness and Cohesion
		Operational Programme (Grant KK.01.1.1.01.0004) and by the Croatian Science Foundation under the project no. IP-2018-01-1313.  The second author is supported by a Bitdefender Postdoctoral scholarship at the ``Simion Stoilow" Institute of Mathematics of the Romanian Academy}
	
	\subjclass[2010]{Primary 11F80, Secondary 11G05}
	\keywords{Galois representations, Elliptic curves}
	
	\maketitle

	\begin{abstract}  In this note we prove that for every integer $d \geq 1$, there exists an explicit constant $B_d$ such that the following holds. Let $K$ be a number field of degree $d$, let $q > \max\{d-1,5\}$ be any rational prime that is totally inert in $K$ and $E$ any elliptic curve defined over $K$ such that $E$ has potentially multiplicative reduction at the prime $\fq$ above $q$. Then for every rational prime $p> B_d$, $E$ has an irreducible mod $p$ Galois representation. This result has Diophantine applications within the ``modular method". We present one such application in the form of an Asymptotic version of Fermat's Last Theorem that has not been covered in the existing literature.
		
	\end{abstract}
	
	\section{Introduction}

	Throughout this article $K$ will denote a number field, $G_K = \Gal(\overline K/K)$ its absolute Galois group and $E$ an elliptic curve defined over $K$. For a rational prime $p$, we are going to write $\rhobar_{E,p}$ for the representation
	$$ \rhobar_{E,p}: G_K \to \Aut(E[p]) \cong \GL_2(\F_p)$$
	arising from the action of $G_K$ on the $p$-torsion points in $E(\overline{K})$. In the language of Galois representations, $E$ having a $p$-isogeny defined over $K$ is equivalent to $\rhobar_{E,p}$ being reducible.
	
	When $K= \Q$, it follows from Mazur's Theorem \cite[Theorem 1]{Mazur1978} that for $p > 163$ the representation  $\rhobar_{E,p}$ is irreducible for all elliptic curves $E$. For a general number field $K$, the question of whether  there is a constant $B_K$ such that $\rhobar_{E,p}$ is irreducible for all primes $p>B_K$ and all elliptic curves $E$ without complex multiplication is an active topic of research. The existence of such a constant $B_K$ has not been proved for any number field other than $\Q$.
	
	The ``modular approach" is a powerful method for showing that certain Diophantine equations do not have solutions using Galois representations of elliptic curves \cite{Siksek2007}. Absolute irreducibility of the mod $p$ Galois representations associated to Frey elliptic curves is a necessary hypothesis for successful applications of this method. With these type of Diophantine applications in mind, Freitas and Siksek \cite{FreSikCriteria} proved the following theorem.
	
	\begin{theorem}[\cite{FreSikCriteria}*{Theorem 2}] \label{trealired} Let $K$ be a totally real Galois field. There is an explicit effective constant $B_K$, depending only on $K$ such that for any rational prime $p> B_K$ and elliptic curve $E$ over $K$ semistable at all $\mathfrak p \mid p$, the representation $\rhobar_{E,p}$ is irreducible.
	\end{theorem}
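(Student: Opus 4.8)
The plan is to argue by contraposition: assume $\rhobar_{E,p}$ is reducible and deduce an explicit upper bound on $p$ depending only on $K$. Reducibility means that, after conjugation, $\rhobar_{E,p}$ is upper triangular, so $E$ admits a $G_K$-stable line in $E[p]$ and hence an isogeny character $\theta \colon G_K \to \F_p^\times$ (the action on the stable line), the character on the quotient being $\theta' = \chi_p\,\theta^{-1}$, where $\chi_p$ denotes the mod $p$ cyclotomic character. The entire problem reduces to controlling $\theta$ closely enough to produce a congruence that $p$ must satisfy.

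First I would pin down the local behaviour of $\theta$ at the primes above $p$. For $\mathfrak{p} \mid p$, reducibility of $\rhobar_{E,p}$ forces reducibility of its restriction to the decomposition group, which for $p$ large relative to the ramification of $K$ rules out good supersingular reduction at $\mathfrak{p}$ (there the inertial action is induced from a fundamental character of level $2$ and hence irreducible); thus $E$ has good ordinary or multiplicative reduction there, and Serre's analysis of the inertia action gives $\theta|_{I_{\mathfrak{p}}} = \chi_p^{a_{\mathfrak{p}}}$ with $a_{\mathfrak{p}} \in \{0,1\}$. Away from $p$ the character $\theta$ can ramify only at the bad primes of $E$, but the semistability defect is uniformly bounded, so $\theta^{12}$ is unramified outside $p$. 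By class field theory $\theta^{12}$ then factors through a ray class group of $p$-power modulus, with $\theta^{12}|_{I_{\mathfrak{p}}} = \chi_p^{12 a_{\mathfrak{p}}}$.

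Next I would exploit that $K/\Q$ is Galois and totally real. Galois conjugation permutes the primes above $p$ transitively and permutes the integers $a_{\mathfrak{p}}$ accordingly, which rigidifies the ``signature'' $(a_{\mathfrak{p}})_{\mathfrak{p}\mid p}$. Feeding global units into the Artin map for $\theta^{12}$, together with the sign constraints from the complex conjugations (each $c$ satisfies $\theta(c) = \pm 1$ while $\chi_p(c) = -1$, whence $\theta(c)\theta'(c) = -1$), yields a relation $u^{m} \equiv 1 \pmod{\mathfrak{P}}$ for a unit $u \in \mathcal{O}_K^\times$ and an integer $m$ built from the $a_{\mathfrak{p}}$, where $\mathfrak{P}$ is a prime of $\overline{\Q}$ above $p$. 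When the signature is constant (all $a_{\mathfrak{p}}$ equal), one of $\theta, \theta'$ is unramified everywhere and so has order dividing $h_K$; evaluating it at a Frobenius element and invoking the Hasse--Weil bound $|a_{\mathfrak{q}}(E)| \le 2\sqrt{N\mathfrak{q}}$ then bounds $p$ directly.

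The main obstacle is the genuinely mixed signature case, which is the technical heart of the argument (a Momose--David type analysis). Here one must show that the exponent $m$ in the unit relation above is nonzero and that the integer $\norm(u^{m}-1)$, or the analogous quantity obtained by comparing $\theta + \theta'$ with $a_{\mathfrak{q}}(E)$ at a well-chosen auxiliary prime $\mathfrak{q} \nmid p$, is a nonzero integer whose size is controlled purely in terms of $K$. Since $p$ divides this nonzero integer, this yields the explicit effective constant $B_K$. The totally real Galois hypotheses enter precisely at this stage, both to guarantee a unit whose relevant power is nontrivial and to let the Galois symmetry force $m \neq 0$; establishing this non-vanishing uniformly over all admissible $E$ is the delicate point.
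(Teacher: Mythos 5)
First, a point of reference: the paper itself contains no proof of this statement --- Theorem \ref{trealired} is imported verbatim from Freitas--Siksek \cite{FreSikCriteria}, so your attempt must be measured against the proof given there. Your outline reproduces that proof's architecture faithfully: pass to the isogeny character $\theta$ and its complement $\theta'=\chi_p\theta^{-1}$; use semistability at $\fp\mid p$ (and exclusion of supersingular reduction for large $p$) to get $\theta|_{I_{\fp}}=\chi_p^{a_{\fp}}$ with $a_{\fp}\in\{0,1\}$; pass to $\theta^{12}$ to kill ramification at additive primes away from $p$; feed units into the Artin map; and split into the constant-signature case (small-order character, Frobenius at an auxiliary prime, Hasse--Weil) and the mixed-signature case. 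That is exactly the Freitas--Siksek strategy.

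The problem is that you stop at precisely the step that carries the content of the theorem, and your formulation of that step distorts it. In the mixed-signature case you defer the required non-vanishing as ``the delicate point''; without it no constant $B_K$ is produced and nothing is proved. Worse, writing the unit relation as $u^{m}\equiv 1\pmod{\mP}$ with $m\neq 0$ makes the issue look trivial: a totally real field contains only $\pm 1$ as roots of unity, so for a fundamental unit $u$ and $m\neq 0$ one has $u^m\neq 1$ automatically. The relation class field theory actually gives is $\prod_{\sigma\in S}\sigma(u)^{12}\equiv 1\pmod{\mP}$ for \emph{every} unit $u$, where $S\subset\Gal(K/\Q)$ is the set of embeddings with $a_{\sigma}=1$ (here is where Galois-ness is used: it lets one identify the signature with a subset of the group, a union of cosets of the decomposition group at $p$); the danger is a multiplicative relation among the \emph{conjugates} $\sigma(u)$, not the vanishing of an exponent. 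The missing argument, which is where ``totally real'' genuinely enters, runs as follows: in the mixed case $S$ is nonempty and proper, so the linear functional $x\mapsto\sum_{\sigma\in S}x_\sigma$ is nonzero on the trace-zero hyperplane of $\R^d$; by Dirichlet's unit theorem the logarithmic image of $\cO_K^\times$ has rank $d-1$ (totally real!) and hence spans that hyperplane, so some fundamental unit $u$ satisfies $\sum_{\sigma\in S}\log|\sigma(u)|\neq 0$; then $\prod_{\sigma\in S}\sigma(u)^{12}\neq 1$, so $p$ divides the nonzero integer $\norm\bigl(\prod_{\sigma\in S}\sigma(u)^{12}-1\bigr)$, whose absolute value is bounded effectively in terms of the regulator of $K$. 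Taking the maximum over the finitely many subsets $S$, together with the constant-signature bound, yields $B_K$. As written, your proposal is a correct road map of \cite{FreSikCriteria}, but the theorem is exactly the assertion you left unproved.
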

	
	\noindent\textbf{Remark.}
	When $K$ is totally real and $p$ is odd, the existence of a complex conjugation in $G_K$ whose image under $\rhobar_{E,p}$ is similar to $\left( \begin{array}{cc} 1 & 0 \\ 0 & -1 \end{array} \right)$ implies that if $\rhobar_{E,p}$ is irreducible, then it is absolutely irreducible. This is not true if $E$ is defined over a general number field $K$.
	
	Recent progress in (potential) modularity over number fields that are not totally real sparked an interest into attacking Diophantine equations over number fields via the modular method. In \cite{FrSiKr,ASiksek} the authors discuss the Fermat equation with prime exponent over general number fields $K$
	\begin{equation} \label{fermateq}
	a^p+b^p+c^p=0,
	\end{equation}
	where $a,b,c \in K$ and $p$ is a rational prime, assuming two conjectures from the Langlands programme. The results in the aforementioned papers are in the direction of the ``Asymptotic Fermat's Last Theorem". To be precise, under various hypotheses on $K$, the authors of these papers prove that for $p$ larger than a constant $C_K$, the equation \eqref{fermateq} does not have non-trivial solutions. Under the same two conjectures, Kara and \"{O}zman \cite{KarOz} proved asymptotic versions of the so called ``Generalized Fermat equation" over number fields.
	
	In \cite{turcas2018} and \cite{turcas2020}, the second author considered the Fermat equation over quadratic imaginary fields of class number one, assuming \cite{turcas2020}*{Conjecture 2.2}. The latter is commonly called ``Serre's modularity conjecture" and is one of the two conjectures assumed in \cite{ASiksek,FrSiKr}.
	
	The hypotheses required for applying \cite{turcas2020}*{Conjecture 2.2} include the absolute irreducibility of the mod $p$ Galois representations $\rhobar_{E,p}$. Proving results such as Theorem \ref{trealired} for general number fields $K$ is a hopeless task. This is due to the possible existence of elliptic curves defined over $K$ whose CM-fields are contained in $K$. The representations $\rhobar_{E,p}$ of such curves are reducible for infinitely many (half of the) values of $p$ and irreducible but absolutely reducible for almost all the remaining ones.

%For a prime $\fq$ above $q$, denote by $f(\fq/q)$ the inertial degree of $\fq$.

Our main results in this paper are as follows.

	\begin{theorem} \label{mainquad}
		Let $K$ be a quadratic field and let $q>5$ be a rational prime that is inert in $K$. Suppose an elliptic curve $E/K$ has multiplicative reduction at the prime $\fq$ of $K$ above $q$ and let $p>71$ be a prime.  Then $E$ has an irreducible mod $p$ representation.
	\end{theorem}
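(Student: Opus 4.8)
The plan is to argue by contradiction: assuming $\rhobar_{E,p}$ is reducible for some $p>71$, I will show that $p$ must in fact be at most $71$. Reducibility means there is a $G_K$-stable line in $E[p]$, so after a choice of basis $\rhobar_{E,p}\sim\left(\begin{smallmatrix}\theta & *\\ 0 & \theta'\end{smallmatrix}\right)$ for characters $\theta,\theta'\colon G_K\to\F_p^\times$ with $\theta\theta'=\chi_p$, the mod $p$ cyclotomic character (since $\det\rhobar_{E,p}=\chi_p$). Here $\theta$ is the isogeny character attached to the kernel of the $p$-isogeny. A preliminary but important observation is that multiplicative reduction at $\fq$ forces $j_E$ to have negative valuation at $\fq$; in particular $E$ does not have potentially good reduction at $\fq$ and hence is not a CM curve. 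This removes precisely the obstruction highlighted in the introduction and is what makes a uniform bound plausible.

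The first real step is a local analysis at $\fq$ via the theory of the Tate curve. Since $E$ has multiplicative reduction at $\fq$, over $K_\fq$ it is a (possibly quadratic unramified twist of a) Tate curve, so $\rhobar_{E,p}|_{G_{K_\fq}}$ is an extension of the unramified character $\epsilon$ by $\chi_p\epsilon$, where $\epsilon$ is trivial in the split case and the unramified quadratic character in the non-split case. As $p\neq q$, both $\chi_p$ and $\epsilon$ are unramified at $\fq$, so comparing with the global upper-triangular shape shows that $\theta$ and $\theta'$ are unramified at $\fq$ and that $\{\theta(\Frob_\fq),\theta'(\Frob_\fq)\}=\{\eta,\eta q^2\}$ with $\eta=\epsilon(\Frob_\fq)\in\{\pm1\}$; crucially $\chi_p(\Frob_\fq)=\norm\fq\equiv q^2\pmod p$ because $q$ is inert. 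In particular $\theta(\Frob_\fq)\in\{\pm1,\pm q^2\}\pmod p$, and this is the single place where the inertness hypothesis enters.

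Next I would control $\theta$ at the primes above $p$. Standard results on isogeny characters (Serre's theory of the fundamental characters of inertia, together with the analysis of the finitely many twists coming from the Kodaira type) give that $\theta^{12}$ is unramified outside $p$ and that for each $\fp\mid p$ one has $\theta^{12}|_{I_\fp}=\chi_p^{s_\fp}$ for an integer $0\le s_\fp\le 12$; indeed, global reducibility excludes the supersingular (level-two) case, so the relevant inertia character is a power of the level-one fundamental character $\chi_p$ and is valued in $\F_p^\times$. Viewing $\theta^{12}$ as a ray class character of $K$ of conductor supported on $p$, I would then combine this local description at $p$ with the explicit value at $\Frob_\fq$ from the previous step. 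Evaluating the identity expressing $\theta^{12}$ in terms of the $\chi_p^{s_\fp}$ and a character of $\Cl(K)$ at $\Frob_\fq$, and using $\theta^{12}(\Frob_\fq)\in\{1,q^{24}\}$, produces a congruence relating $p$, $q$ and the exponents $s_\fp$.

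The main obstacle is the final bookkeeping. The congruence just produced is not by itself uniform in $q$, so the argument must show that the ``balanced'' cases — those in which the exponents $s_\fp$ match the Frobenius behaviour at $\fq$, so that no constraint on $p$ drops out — cannot occur for a non-CM curve. This is exactly where the non-integrality of $j_E$ at $\fq$, equivalently the exclusion of complex multiplication and of potentially good reduction, must be fed back in, ruling out the Hecke-character (CM-type) solutions and leaving only finitely many possibilities for $p$. Carrying this out requires a careful case analysis according to the splitting of $p$ in the quadratic field $K$ (inert, split, or ramified), since for split $p$ the character $\theta^{12}$ can be ramified at one prime above $p$ but not the other, which forces one to bring in the unit group and the structure of the ray class group of $K$. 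I expect the explicit constant $71$ to emerge from this last analysis; it is the same threshold that appears in the classification of quadratic points on the modular curves $X_0(p)$, and the hardest part of the proof is to show that every surviving case is incompatible with $p>71$.
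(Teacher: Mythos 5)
Your route --- reading off the isogeny character $\theta$ at $\fq$ from the Tate curve, controlling $\theta^{12}$ at the primes above $p$, and comparing via class field theory --- is genuinely different from the paper's proof (which is geometric, via Mazur--Kamienny--Merel formal immersions), but it has a gap that cannot be repaired within that framework: it cannot produce a bound that is \emph{uniform} in $q$ and $K$. The only auxiliary prime at which you evaluate your characters is $\fq$ itself, whose norm $q^2$ is unbounded. In every ``unbalanced'' case your comparison yields a divisibility of the shape $p \mid N$ where $N$ is a nonzero integer built from $q$, the class number $h_K$, and (for real quadratic $K$) the units --- e.g.\ $p \mid q^{24 h_K}-1$ when the class-field-theory side gives $1$ and the Tate-curve side gives $q^{24}$. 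This bounds $p$ in terms of $q$ and $K$, i.e.\ it proves a statement with a constant $B_{K,\fq}$, which is exactly the kind of result (Theorem \ref{trealired} of Freitas--Siksek, or the constants $C_{K,\mathfrak q}$ discussed in the introduction) that Theorem \ref{mainquad} is designed to improve upon. No amount of case bookkeeping turns $p \mid q^{24h_K}-1$ into $p \le 71$ for all inert $q>5$ and all quadratic $K$ simultaneously.

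The ``balanced'' case is even more serious. When $\theta = \chi_p\cdot\phi$ with $\phi$ unramified everywhere (so $\theta^{12}|_{I_\fp}=\chi_p^{12}$ and $\theta(\Frob_\fq)=\pm q^2$), the two sides of your comparison agree identically and you get no congruence at all. Your plan to exclude this case using the non-integrality of $j_E$ at $\fq$ (equivalently, non-CM) does not work: here $\phi$ is a character of $\Cl(K)$, and nothing about $j_E$ rules out such a twist of $\chi_p$; already over $\Q$ this is precisely the case $\theta\in\{1,\chi_p\}$, whose elimination is the hard content of Mazur's work and requires the Eisenstein ideal, not character bookkeeping. The paper handles all cases at once geometrically: since $q$ is inert and the reduction is multiplicative, the point $x\in X_0(p)(K)$ attached to $(E,C)$ and its conjugate $x^{\sigma}$ reduce modulo $\fq$ to the \emph{same} $\F_q$-rational cusp, so $y=(x,x^{\sigma})\in X_0^{(2)}(p)(\Q)$ reduces to $(\infty,\infty)_{\F_q}$ after an Atkin--Lehner involution; Kamienny's formal immersion criterion (valid uniformly for $q>5$, $p>71$) combined with the rank-zero Eisenstein quotient $J_p(\Q)$ then forces $y$ to equal the cusp pair, contradicting non-cuspidality of $x$. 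The uniform constant $71$ comes from Kamienny's criterion --- your guess about its provenance is right in spirit (the paper's closing remark shows sharpness via the hyperellipticity of $X_0(71)$), but the isogeny-character formalism cannot reach it. A telling symptom of the gap: in your argument inertness of $q$ enters only through $\chi_p(\Frob_\fq)=q^2$, whereas in the paper's proof it is the crucial reason both conjugate points hit the same cusp, which is what makes the formal immersion argument at $(\infty,\infty)_{\F_q}$ available.
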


	For number fields of degree which is larger than two, we obtained the subsequent theorem.
	
	\begin{theorem} \label{maind}
		Let $K$ be a number field of degree $d>2$ and let $q> d-1$ be a rational prime that is totally inert in $K$. Suppose an elliptic curve $E/K$ has multiplicative reduction at the prime $\fq$ of $K$ above $q$ and let $p>65 (2d)^6$ be a prime. Then $E$ has an irreducible mod $p$ representation.
	\end{theorem}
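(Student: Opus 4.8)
The plan is to assume $\rhobar_{E,p}$ is reducible and derive a contradiction for $p>65(2d)^6$ by transporting the problem to a formal immersion argument on the modular curve $X_0(p)$, in the style of Mazur, Momose, Kamienny and Parent. If $\rhobar_{E,p}$ is reducible, then $E[p]$ contains a $G_K$-stable line, i.e.\ a $G_K$-stable cyclic subgroup $C$ of order $p$, so the pair $(E,C)$ defines a non-cuspidal point $P\in X_0(p)(K)$. The multiplicative reduction hypothesis is used first to control $P$: since $E$ has multiplicative reduction at $\fq$ we have $v_{\fq}(j_E)<0$, so $j_E$ is non-integral; in particular $E$ has no complex multiplication and $P$ is not a CM point. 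Over $K_{\fq}$ the curve is a Tate curve, $E[p]$ is generated by $\zeta_p$ and a $p$-th root of the Tate parameter, and (as $\fq\nmid p$) the canonical $\mu_p$-line is $G_{K_{\fq}}$-stable. Because $j_E$ has a pole at $\fq$, the image $j(P)$ reduces to $\infty\in X(1)$, hence $P$ specialises to one of the two $\Q$-rational cusps $0,\infty$ of $X_0(p)$ in characteristic $\fq$; after applying the Atkin--Lehner involution $w_p$, which interchanges these cusps, I may assume $P\equiv\infty\pmod{\fq}$.

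The hypothesis that $q$ is totally inert enters next: there is a single prime $\fq$ above $q$, with residue field $\F_{q^d}$, and $X_0(p)$ has good reduction at $\fq$ since $\fq\nmid p$. Let $J_e$ be the winding quotient of $J_0(p)$ and let $w\colon X_0(p)\to J_e$ be the Abel--Jacobi map based at $\infty$, so that $w(\infty)=0$ and, by Kolyvagin--Logachev, $J_e(\Q)$ is finite. Following the Momose--Kamienny--Parent method I would pass to the $d$-th symmetric power $w^{(d)}\colon X_0(p)^{(d)}\to J_e$ and regard $P$ together with its Galois conjugates as a point of $X_0(p)^{(d)}(\Q)$ whose reduction modulo $\fq$ coincides with that of the divisor $d\cdot\infty$. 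If $w^{(d)}$ is a formal immersion at $(\infty,\dots,\infty)$ over $\F_q$, then the coincidence of these two sections modulo $\fq$, combined with injectivity of the reduction map on the relevant finite subgroup of $J_e$, forces $P=\infty$, contradicting that $P$ is non-cuspidal.

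Two points must be made to work, and this is where the constant $65(2d)^6$ is born: (i) showing that the class $w(P)\in J_e(K)$ is torsion despite the absence of Mordell--Weil rank control over a general $K$, and (ii) verifying the formal immersion criterion in characteristic $q$. For (i) one uses that $J_e$ has analytic rank zero, so $\Q$-rank zero, and exploits the reduction to a cusp at $\fq$ to run Merel's winding-element argument, concluding that the image is annihilated and that reduction is injective on it. For (ii) the formal immersion condition reduces to the linear independence, modulo the Eisenstein ideal and modulo $q$, of the Hecke operators $T_1,\dots,T_d$ on the relevant cotangent space; Kamienny's criterion guarantees this provided $q$ is large relative to $d$, which is precisely why the hypotheses $q>d-1$ and $q>5$ are imposed, and Parent's effective analysis shows the criterion can fail only when $p\le 65(2d)^6$.

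I expect step (ii) to be the main obstacle: making the failure locus of the formal immersion explicit and uniform in $d$, and checking that it is compatible with an inert prime $\fq$ of residue degree $d$. The delicate parts are the bookkeeping of the Eisenstein quotient and the verification that $q^d$, being a prime power attached to a single inert prime, supplies enough distinct geometric points over $\F_{q^d}$ to run the symmetric-power argument; the inertness is what prevents the $d$ conjugate specialisations from spreading over several primes above $q$ and landing at different cusps, which would break the comparison modulo $\fq$.
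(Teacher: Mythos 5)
Your proposal is correct and follows essentially the same route as the paper: you pass to the $\Q$-rational point on the symmetric power $X_0(p)^{(d)}$, use multiplicative reduction at the totally inert prime $\fq$ (plus Atkin--Lehner) to force specialisation to $(\infty,\dots,\infty)$ in characteristic $q$, map to the winding quotient whose Mordell--Weil rank over $\Q$ is zero by Merel (via Kolyvagin--Logachev), and invoke Parent's effective formal-immersion criterion, which is exactly the source of the bound $65(2d)^6$ and of the hypothesis $q>d-1$. Your closing remark about inertness preventing the conjugate specialisations from landing at different cusps is precisely the content of the paper's Proposition on reduction of the symmetric-power point.
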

	
	%Recall that $E/K$ has potentially multiplicative reduction at a prime $\mathfrak q$ in $K$ if and only if $v_{\mathfrak q}(j(E)) < 0$.

The results above imply that there exists a constant $C_d$, depending only on the degree $d=[K:\Q]$ such that for all elliptic curves with multiplicative reduction at a totally inert prime $q>d-1$ it will follow that $\rhobar_{E,p}$ is irreducible for all $p$ larger than $C_d$. %and that for all elliptic curves with multiplicative reduction at all primes $\fq$ above some fixed rational prime $q$, $\rhobar_{E,p}$ is reducible for all $p$ larger than $C_d$ such that $p \nmid (q^{f(\fq/q)}-1)$ for all primes $\fq$ above $q$.

%, then $j(E)$ is integral at certain infinite set of primes such as all the inert primes of $K$.

%Together with additional hypotheses on $E$, the results above imply that if $\rhobar_{E,p}$ is reducible for $p$ larger than a constant $C_K$ depending only on the field $K$, then $j(E)$ is integral outside a finite fixed set of primes $S_d$, where $d=[K : \mathbb Q]$. One possible such hypothesis is a priori knowledge that $j(E)$ is integral outside the inert primes in $K$.
	
	We now explain that, when carrying out the modular approach to Diophantine equations, results such as the ones above are very valuable. A Frey elliptic curve $E$ is semistable outside a fixed finite set of primes. That is, $E$ has good or multiplicative reduction at prime ideals outside this set.  For $p > C_K$ and $\rhobar_{E,p}$ (absolutely) irreducible one can proceed with the modular method as usual. In the case that $\rhobar_{E,p}$ is reducible, together with some additional hypothesis, the theorems above can help proving that the primes of multiplicative reduction for $E$ belong to a finite fixed set. The $j$-invariant of $E$ is hence integral outside this set. Moreover, there are explicit formulas for $j(E)$ depending on the solution to the Diophantine equation in question. Knowing that the denominators of $j(E)$ are supported on a restricted set of primes can therefore lead to a complete resolution of the equation.
	
	To emphasize this phenomenon, recall that if $a,b,c \in \cO_K$ are coprime and satisfy the Fermat equation \eqref{fermateq} one can construct the Frey elliptic curve
	
	\begin{equation} \label{freycurve} E:=E_{a,b,c,p}: Y^2=X(X-a^p)(X+b^p). \end{equation}
	
	The $j$-invariant of this elliptic curve has the formula
	\begin{equation} \label{jinv} j(E) = \frac{2^4(b^{2p}-a^pc^p)}{(abc)^{2p}}.
	\end{equation}
	Knowledge that $j(E)$ is integral outside a finite set of primes implies that $abc$ is actually supported only on that set.
	
	In \cite{turcas2020}, the second author proved the following result.
	
	\begin{theorem}[\cite{turcas2020}*{Theorem 1.3}] \label{2divabc} Let $K$ be a quadratic imaginary number field of class number one. Assume Serre's modularity conjecture (see \cite{turcas2020}*{Conjecture 2.2}) holds over $K$. Then, for any prime
		$p \geq 19$, the equation
		$a^p+b^p+c^p=0$
		does not have solutions in coprime $a,b,c \in \cO_K \setminus \{0 \}$ such that $2 \mid abc$.
	\end{theorem}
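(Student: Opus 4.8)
The plan is to run the modular method over $K$ and feed the resulting mod $p$ representation into Serre's modularity conjecture (\cite{turcas2020}*{Conjecture 2.2}), which predicts a weight-two Bianchi newform whose level I can force to be small. Suppose, for contradiction, that coprime $a,b,c \in \cO_K \setminus \{0\}$ solve \eqref{fermateq} with $2 \mid abc$. Since $\cO_K$ is a PID and the three terms may be permuted, I would first normalise the solution so that a fixed prime $\fq \mid 2$ divides exactly one of $a,b,c$ to a controlled power, and attach the Frey curve $E = E_{a,b,c,p}$ of \eqref{freycurve}. The $2$-torsion $x$-coordinates being $0, a^p, -b^p$, their differences are $a^p, -b^p$ and $a^p+b^p = -c^p$, so the model has discriminant $\Delta_E = 16\,(abc)^{2p}$. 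Hence at every prime $\ml \nmid 2$ dividing $abc$ the curve has multiplicative reduction with $p \mid v_{\ml}(\Delta_E)$, while the hypothesis $2 \mid abc$ is exactly what lets me pin down the reduction type at the primes above $2$ after a suitable quadratic twist.

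Next I would show that $\rhobar_{E,p}$ is irreducible, and in fact absolutely irreducible, for every $p \geq 19$. This is the delicate input: one establishes it either by exploiting the special structure available over imaginary quadratic fields of class number one, or, when a totally inert prime of multiplicative reduction happens to be present, by irreducibility criteria of the type proved in this paper. Granting irreducibility, Serre's conjecture yields an isomorphism $\rhobar_{E,p} \cong \rhobar_{f,\fp}$ for some weight-two Bianchi newform $f$ over $K$ whose level is the Serre conductor of $\rhobar_{E,p}$ and some prime $\fp \mid p$ of its Hecke field. Because the multiplicative primes $\ml \nmid 2$ carry $\Delta$-valuation divisible by $p$, the level-lowering built into the conjecture (the analogue of Ribet's theorem) removes them, so the level $\gN$ of $f$ becomes supported only on the primes above $2$, with exponent bounded independently of the solution.

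Finally I would eliminate $f$. Running over the nine imaginary quadratic fields of class number one, the rational prime $2$ is inert, split, or ramified, and in each case the predicted level $\gN \mid (2)^{k}$ is small enough that the finite space of weight-two Bianchi newforms of level $\gN$ can be analysed directly. The aim is to show this space is either trivial or contains only forms whose mod $p$ reductions cannot match $\rhobar_{E,p}$: any surviving $f$ is tested against auxiliary primes $\ml$, where the congruence $a_{\ml}(f) \equiv a_{\ml}(E) \pmod{\fp}$, combined with the rational $2$-torsion and multiplicative reduction of $E$, forces relations that fail once $p \geq 19$, unless $f$ is of CM type or a base change already excluded by the irreducibility of $\rhobar_{E,p}$. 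I expect the two genuine difficulties to be, first, the absolute irreducibility of $\rhobar_{E,p}$ (which is why the hypotheses throughout this circle of ideas are arranged around an inert prime of multiplicative reduction), and second, ruling out the finite list of weight-two newforms at the $2$-adic levels field by field; the class-number-one and $2 \mid abc$ hypotheses are precisely what keep both steps tractable.
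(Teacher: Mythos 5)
First, a structural point you could not have known but which frames this review: the paper does not prove Theorem~\ref{2divabc} at all. It is imported verbatim from \cite{turcas2020}*{Theorem 1.3}, and the only commentary offered here is the remark that the hypothesis $2 \mid abc$ appears precisely because absolute irreducibility of $\rhobar_{E,p}$ for $p \geq 19$ could only be established under that hypothesis, after which ``the rest of the proof of the theorem above goes through.'' Your outline does follow the same modular-method skeleton as the actual proof in \cite{turcas2020} (Frey curve, absolute irreducibility, Conjecture 2.2, Serre conductor supported at the primes above $2$, elimination of the predicted forms), and the routine facts you state are correct: $\Delta_E = 16(abc)^{2p}$, multiplicative reduction at $\ml \nmid 2$ dividing $abc$, and $p \mid v_{\ml}(\Delta_E)$, which is what kills the conductor at those primes. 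But the two steps you yourself flag as ``the genuine difficulties'' are exactly the mathematical content of the theorem, and both are left unproved; moreover, the specific routes you sketch for them would not work.

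Concretely: (a) For absolute irreducibility, your fallback ``irreducibility criteria of the type proved in this paper'' cannot deliver it. Theorem~\ref{mainquad} requires a rational prime $q > 5$ that is \emph{inert} in $K$ with multiplicative reduction at $\fq$, and yields irreducibility only for $p > 71$; here the only prime at which reduction is controlled lies above $2$, which is ramified in $\Q(i)$ and $\Q(\sqrt{-2})$ and split in $\Q(\sqrt{-7})$ (so not even inert in all nine fields), is in any case smaller than $5$, and the theorem claims $p \geq 19$. Furthermore, Theorem~\ref{mainquad} gives irreducibility, not absolute irreducibility, and as the paper's remark after Theorem~\ref{trealired} stresses, the implication fails over fields that are not totally real. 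The proof in \cite{turcas2020} instead exploits the potentially multiplicative reduction at $\fq \mid 2$ forced by $2 \mid abc$, via the Tate curve and isogeny-character arguments tailored to imaginary quadratic fields of class number one. (b) For the elimination step, your framework is wrong for this setting: over an imaginary quadratic field, \cite{turcas2020}*{Conjecture 2.2} does not produce a characteristic-zero weight-two Bianchi newform with a Hecke field and a prime $\fp \mid p$; it produces a mod $p$ eigenclass in the homology of the relevant Bianchi group, and torsion classes that do not lift to characteristic zero are a genuine phenomenon there. Hence the comparison $a_{\ml}(f) \equiv a_{\ml}(E) \pmod{\fp}$ against a finite list of newforms is not available as stated; what is actually done (in \cite{turcas2018} and \cite{turcas2020}) is to compute the homology at the predicted levels, field by field, and show that every Hecke eigensystem occurring in it modulo $p$ is Eisenstein, i.e.\ reducible, contradicting the absolute irreducibility established in step (a). Without these two inputs carried out, the proposal is a plan rather than a proof.
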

	
	\noindent\textbf{Remark.} The only reason for which the restriction $2 \mid abc$ appears in the statement of the theorem is as follows. Let $E$ be the Frey curve associated to the putative solution to \eqref{fermateq} as in \eqref{freycurve}. The author of \cite{turcas2020} could only prove that $\rhobar_{E,p}$ is absolutely irreducible for $p \geq 19$ such that $2 \mid abc$. The absolute irreducibility is required in the hypothesis of Serre's modularity conjecture (see  \cite{turcas2020}*{Conjecture 2.2}) and, if one shows it, the rest of the proof of the theorem above goes through.
	
	\medskip
	
	\noindent\textbf{Remark.} Previous results on the Asymptotic Fermat's Last Theorem over general fields that are not necessary totally real (see \cite{ASiksek}*{Theorems 1.1 and 1.2} or the theorems in \cite{FrSiKr}) are all stated for number fields $K$ which contain a prime $\mathfrak q$ with residue field $\F_2$ above $2$. One of the reasons for this restriction is that, over general $K$, the authors of the previously mentioned works had to assume that $E$ has a fixed prime of potentially multiplicative reduction $\mathfrak q$ in order to prove that $\rhobar_{E,p}$ is irreducible for $p$ larger than a constant $C_{K,\mathfrak q}$. If $\mathfrak q$ has residue field $\F_2$, considering $a^p+b^p+c^p=0 \pmod{\mathfrak q}$, one sees that $\mathfrak q \mid abc$. Using the formula \eqref{jinv} it can be easily deduced that $v_{\mathfrak q}(j(E)) <0$ for $p$ large enough and $\mathfrak q$ is the desired prime of multiplicative reduction for $E$.
	\medskip

	As an application to Theorem \ref{mainquad}, we prove the following version of the Asymptotic Fermat's Last Theorem for at least half of the prime exponents $p$.
	
	\begin{theorem} \label{thm:application}
		Let $K$ be a quadratic imaginary field of class number one and suppose that Serre's Modularity Conjecture (see Conjecture 2.2 in \cite{turcas2020}) holds for $K$. Fix $S$, a finite set of rational primes containing $2$, $3$ and $5$. There exists a constant $C_{S}$, depending only on $S$, such that for all $p > C_{S}$, if the following hold:
		\begin{enumerate}
			\item  $p \equiv 1 \pmod{3}$ or $p$ is a prime that splits in $K$ and such that $p \equiv 3 \pmod{4}$;
			
			\item  $a,b,c \in \cO_K \setminus \{0 \}$ are coprime such that $a^p+b^p+c^p =0$;
			
			\item If $l \notin S$ is a rational prime that divides $\norm(abc)$, then $l$ is inert in $K$. %or all the primes $\mathfrak l$ above $l$ divide $abc$ in $\cO_K$ and $p\nmid (l^{f(\mathfrak{l}/l)}-1)$;
		\end{enumerate}
		then $K = \Q(\sqrt{-3})$ and the triple $(a,b,c)$ is, up to reordering, $(1,\epsilon, \epsilon^2)$, where $\epsilon$ is a non-trivial third root of unity.
	\end{theorem}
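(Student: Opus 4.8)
The plan is to run the modular method with the Frey curve \eqref{freycurve}, using Theorem \ref{mainquad} as the source of irreducibility. Fix a putative solution $(a,b,c)$ satisfying (1)--(3) and set $E=E_{a,b,c,p}$; recall that $E$ has full $2$-torsion over $K$, is semistable away from the primes above $2$, and satisfies $v_{\fq}(\Delta_E)=2p\,v_{\fq}(abc)$ at every $\fq\nmid 2$. First I would distinguish two cases according to whether $\norm(abc)$ has a rational prime factor outside $S$. Suppose it does, say $l\mid\norm(abc)$ with $l\notin S$; since $2,3,5\in S$ we have $l>5$, and hypothesis (3) forces $l$ to be inert, so $\fq=l\cO_K$ lies above the inert prime $q=l>5$ and $E$ has (genuine, by semistability) multiplicative reduction at $\fq$. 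Theorem \ref{mainquad} then applies and yields that $\rhobar_{E,p}$ is irreducible for all $p>71$.

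The hard part will be promoting irreducibility to \emph{absolute} irreducibility, because $K$ is imaginary and the complex-conjugation argument recalled after Theorem \ref{trealired} is not available. The only way this can fail is that $\rhobar_{E,p}$ becomes reducible over $\F_{p^2}$, i.e. $\image\rhobar_{E,p}$ lies in a non-split Cartan subgroup; equivalently $\rhobar_{E,p}\otimes\overline{\F}_p\cong\theta\oplus\theta^{\sigma}$ with $\sigma$ the nontrivial automorphism of $\F_{p^2}/\F_p$ and $\theta\theta^{\sigma}=\chi_p$ the mod $p$ cyclotomic character. Potentially multiplicative reduction at $\fq$ already shows $E$ has no complex multiplication, but this does not by itself bound the sporadic primes uniformly over the Frey family, so instead I would pin down $\theta$ locally: it is unramified away from the primes above $2p$, and at $\mathfrak p\mid p$ the Hodge--Tate weights $\{0,1\}$ together with semistability force $\theta|_{I_{\mathfrak p}}$ to be one of two explicit characters. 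The interplay of the values of $\theta$ on Frobenius with the relation $\theta\theta^{\sigma}=\chi_p$ is then consistent only for $p$ in certain congruence classes, and the role of hypothesis (1) -- $p\equiv 1\pmod 3$, or $p$ split in $K$ with $p\equiv 3\pmod 4$ -- is precisely to exclude these classes. This is the analogue, for an arbitrary inert prime of multiplicative reduction, of the absolute irreducibility that \cite{turcas2020} could establish only under the extra hypothesis $2\mid abc$, and I expect it to be the most delicate step.

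Granting absolute irreducibility, I would apply \cite{turcas2020}*{Conjecture 2.2} to obtain a weight-$2$ eigenform $f$ over $K$ with $\rhobar_{E,p}\cong\rhobar_{f,\mathfrak p}$ of level $\cN=\cN(\rhobar_{E,p})$. Level lowering is automatic: at each prime $\fq\nmid 2p$ of multiplicative reduction one has $p\mid v_{\fq}(\Delta_E)=2p\,v_{\fq}(abc)$, so $\rhobar_{E,p}$ is unramified at $\fq$ and $\cN$ is supported only on the primes above $2$. Hence $\cN$ is bounded independently of $p$, and only finitely many eigenforms $f$ can occur.

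Finally I would eliminate the surviving forms. For each candidate $f$ I compare $\rhobar_{E,p}$ with $\rhobar_{f,\mathfrak p}$ at a fixed auxiliary prime $\mathfrak r$: the eigenvalue $a_{\mathfrak r}(f)$ is a fixed algebraic integer, whereas the admissible Frey traces at $\mathfrak r$ form an explicit finite set, so for $p$ beyond a bound $C_S$ the congruence forces an exact equality that no form without complex multiplication can satisfy. The CM forms that remain exist only over $K=\Q(\sqrt{-3})$; matching them with $E$, together with the deferred case in which $abc$ is an $S$-unit -- which I would settle directly via the finiteness of solutions of the $S$-unit equation $a^p+b^p+c^p=0$, forcing all prime contributions into roots of unity once $p$ is large -- leaves exactly the triple $(1,\epsilon,\epsilon^2)$ with $\epsilon$ a primitive cube root of unity, as claimed.
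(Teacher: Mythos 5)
Your overall case structure (non-$S$-unit case via the modular method, $S$-unit case via Siegel) is workable, and your use of Theorem \ref{mainquad} to get irreducibility when some $l\notin S$ divides $\norm(abc)$ matches the role that theorem plays in the paper (the paper uses it in the contrapositive: once reducibility is established, there can be no multiplicative reduction at inert primes outside $S$). But the step you yourself flag as delicate --- promoting irreducibility to absolute irreducibility, i.e.\ ruling out image in a non-split Cartan subgroup --- is a genuine gap, and the purely local strategy you sketch cannot close it. A character $\lambda:G_K\to\F_{p^2}^\times$ with $\lambda\lambda^p=\chi_p$, unramified outside $2p$ and with the expected behaviour at primes above $p$, is exactly what a CM elliptic curve produces; all of your proposed constraints (conductor outside $2p$, Hodge--Tate weights, Frobenius compatibilities) are satisfied by such CM-type characters, so no contradiction in terms of congruence classes on $p$ can come out of local data alone. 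The obstruction is global, and the paper needs two ingredients you never invoke. First, Theorem \ref{2divabc}: it gives $2\nmid abc$, hence the Frey curve has \emph{additive} potentially good reduction at the inert prime $2$, so by \cite{AFreitasSiksek}*{Lemma 3.7} the order of $\rhobar_{E,p}(I_{\fq})$ (for $\fq=2\cO_K$) is divisible by $3$; evaluating $\lambda^{p+1}=\chi_p$ on inertia at $2$ then forces $3\mid p+1$, which kills the alternative $p\equiv 1\pmod 3$ of hypothesis (1). Second, for the remaining alternative ($p$ split in $K$, $p\equiv 3\pmod 4$) the paper invokes the Larson--Vaintrob theorem (Theorem \ref{larvain}): it produces a CM curve $E'/K$ with $\lambda^{12}=\theta^{12}$, and because $p$ \emph{splits} in $K$ the character $\theta$ is $\F_p$-valued with order dividing $p-1$; an order count at a prime $\fp\mid p$ then shows $\lambda^6|_{G_{K_\fp}}$ is $\F_p$-valued of order $p-1$, and since $12\mid p+1$ the identity $\chi_p(\sigma)=(\lambda^6(\sigma))^{(p+1)/6}$ makes a generator of $\F_p^\times$ a square --- contradiction. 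Note that the two alternatives of hypothesis (1) thus play completely different roles: one clashes with a congruence, the other is what makes $\theta$ split-Cartan (hence $\F_p$-valued); your description of (1) as merely ``excluding congruence classes'' misses this.

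There is also a structural inefficiency worth pointing out: you propose to rerun the whole modular machinery (Conjecture 2.2, level lowering, elimination of forms) after establishing absolute irreducibility, whereas the paper never does this. It instead quotes Theorem \ref{2divabc} as a black box: by the remark following that theorem, for any putative solution with $p\geq 19$ the representation $\rhobar_{E,p}$ must already be \emph{absolutely reducible} (otherwise the proof of Theorem \ref{2divabc} goes through and eliminates the solution), and it is unramified outside $2p$. The paper then shows ``irreducible but absolutely reducible'' is impossible (the argument above), concludes $\rhobar_{E,p}$ is reducible over $\F_p$, and feeds that into Theorem \ref{mainquad} and hypothesis (3) to force $abc$ to be supported on $S$, after which the Siegel/$S$-unit argument (which you do have, in essentially the paper's form) finishes the proof. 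Without invoking Theorem \ref{2divabc} you do not even know $2\nmid abc$, so the inertia-at-$2$ argument that your missing step requires is not available to you.
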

	We note that the larger $S$ is taken, the less restrictive hypothesis (3) becomes. However, this comes at the cost of (probably) increasing the constant $C_{S}$.
	
	This result is aligned with a more general Fermat Conjecture. We refer the interested reader to the discussion on page 2 of \cite{FrSiKr}, where the authors point out that the Fermat Conjecture is a consequence of the $abc$-conjecture for number fields.
	
	\medskip

\noindent\textbf{Remark.} We note that the result in the theorem above holds over general number fields $K$ if one includes the additional assumption that there are no weight $2$ complex eigenforms for $\GL_2$  of level $\mathcal N_K$, the latter quantity depending only on the number field $K$. We refer to Section 2 in \cite{ASiksek} for a precise definition of these eigenforms and to Lemma 5.3 in the same article for more details regarding the level $\mathcal N_K$.

	\section{Formal immersions and the proof of Theorems \ref{mainquad} and \ref{maind}}
	\label{sec:fomral}
	
	In this section we prove Theorems \ref{mainquad} and \ref{maind}.
	%that an elliptic curve over a number field $K$ of degree $d$ having (potentially?) multiplicative reduction at a prime $\fq >C_d$ (a constant depending on $d$) cannot have an isogeny of degree $p >B_K$.
	In particular, we prove that if an elliptic curve has multiplicative reduction at the prime above some totally inert rational prime $q$, then it cannot have an isogeny of large prime degree $p$.% if $q$ is inert or $p \nmid (q^{f(\fq/q)}-1)$ for every $\mathfrak q$ above $q$.

	The idea is essentially to use the method going back to Mazur and Kamienny (see for example \cite{Kamienny-IMRN}), as modified by Merel \cite{MerelB} (see also \cite{Parent}). All the aforementioned papers use the fact that if the elliptic curve $E$ has a point of large order $n$ then it forces the curve to have a prime $\fq$ such that the curve has multiplicative reduction over it and all of its Galois conjugates. Then it follows that the reduction mod $\fq$ of a putative non-cuspidal point on $X_0(p)$ corresponding to this curve will be the same as the reduction mod $\fq$ of a cusp (the same is true for all Galois conjugates of $\fq$). This is then shown to be impossible by proving that a certain map from a symmetric power into a quotient (the Eisenstein quotient in Mazur's and Kamienny's papers and the \textit{winding quotient} in Merel's) of the Jacobian $J_0(n)$ of $X_0(n)$, which is of rank 0 over $\Q$, is a formal immersion at the aforementioned cusp modulo $\fq$.
	
	This method cannot be applied to elliptic curves with isogenies without assuming anything else about the curve. The existence of an isogeny of arbitrarily large degree does not force multiplicative reduction at any prime over number fields, as is easily seen on elliptic curves with complex multiplication. However, if one supposes multiplicative reduction at the prime $\fq$ above the totally inert rational prime $q$ that is not very small (as compared to the degree of the number field) and the existence of an isogeny of large prime degree $p$, %such that $p \nmid (q^{f(\fq/q)}-1)$ simultaneously,
then one can arrive at a contradiction using basically the same argument as before.
	
	Let $X_0(p)$ be the classical modular curve of level $p$, whose non-cuspidal $K$-rational points represent isomorphism classes of pairs $(E,C)$ of elliptic curves $E$ together with a $K$-rational subgroup $C$ of order $p$. The curve $X_0(p)$ has two cusps. We follow the convention (as in \cite{DR}) that $\infty$ is the cusp which is unramified under the $j$-map $X_0(p)\rightarrow X_0(1)$. This map is ramified of degree $p$ at the cusp $0$. With this convention, $0$ represents a a N\'eron $p$-gon and $\infty$ represents  and a N\'eron $1$-gon.

\begin{assumption}

Fix the following notation and assumptions for this section. Let $\fq$ be the (unique) prime above the totally inert rational prime $q$ and let $E/K$ have multiplicative reduction at $\fq$ and a $K$-rational subgroup $C$ of order $p$. %Suppose that either $q$ is totally inert in $K$ or $p\nmid (q^{f(\fq/q)}-1)$ for all primes $\fq$ above $q$, where $f(\fq/q)$ denotes the inertial degree of $\fq$.
Let $\sigma_1, \ldots, \sigma_d$ be the embeddings of $K$ into $\bar K$. Let $x\in X_0(p)(K)$ be the point corresponding to $(E,C)$, and let $y=(x^{\sigma_1}, \ldots, x^{\sigma_d})\in X_0^{(d)}(p)(\Q)$ be the point on the symmetric $d$-th power of $X_0(p)$.

\end{assumption}
 %We will think of the points of $X_0^{(d)}(p)(\Q)$ as effective $\Q$-rational divisors of degree $d$ on $X_0(p)$.
	
	\begin{proposition} \label{prop-red}
		 The point $y\in X_0^{(d)}(p)(\Q)$ reduces to $(\infty,\ldots, \infty)_{\F_q}$ after possibly applying an appropriate Atkin-Lehner involution.
	\end{proposition}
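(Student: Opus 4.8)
The plan is to show that the reduction of the modular point $x=(E,C)$ modulo $\fq$ is a cusp, and then to use total inertia of $q$ to propagate this to every Galois conjugate appearing in $y$. Since $q\neq p$, the curve $X_0(p)$ has good reduction at $q$; I would let $\cX_0(p)$ denote a smooth proper model over $\cO_{K,\fq}$ and, by the valuative criterion of properness, extend $x\in X_0(p)(K)\subset X_0(p)(K_\fq)$ to a section whose special fibre is a point $\bar x\in X_0(p)(\F_{q^d})$ (the residue field of $\fq$ being $\F_{q^d}$ because $q$ is totally inert). Because $E$ has multiplicative reduction at $\fq$, its N\'eron model has toric identity component, so in the Deligne--Rapoport model the pair $(E,C)$ degenerates to a N\'eron polygon carrying its $\Gamma_0(p)$-structure; that is, $\bar x$ lies on the cuspidal locus. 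As $X_0(p)$ has exactly the two cusps $\infty$ and $0$, this gives $\bar x\in\{\infty,0\}$. Via the Tate parametrization $E\cong K_\fq^{\times}/q_E^{\Z}$ with $v_\fq(q_E)=-v_\fq(j(E))>0$, one checks that $\bar x=\infty$ precisely when $C$ reduces to the multiplicative subgroup $\mu_p\subset\mathbb{G}_m$, and $\bar x=0$ otherwise.

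Next I would identify the reductions of the conjugate points $x^{\sigma_i}$ with the Frobenius conjugates of $\bar x$. Fix a $q$-adic place of $\overline K$ lying over $\fq$; since $q$ is totally inert, $K\otimes_\Q\Q_q=K_\fq$ is the unramified extension of $\Q_q$ of degree $d$, and after this identification the embeddings $\sigma_1,\dots,\sigma_d$ become the $d$ elements of $\Gal(K_\fq/\Q_q)$. This group is cyclic and its elements reduce to the powers of the $q$-power Frobenius on $\F_{q^d}$, so the reductions $\overline{x^{\sigma_i}}$ are exactly $\bar x,\bar x^{\,q},\dots,\bar x^{\,q^{d-1}}$. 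Now both cusps $\infty$ and $0$ of $X_0(p)$ are rational over $\Q$, so their reductions are $\F_q$-rational and hence fixed by $\Frob_q$. Since $\bar x\in\{\infty,0\}$, every Frobenius conjugate of $\bar x$ equals $\bar x$, and therefore $y$ reduces to $(\bar x,\dots,\bar x)_{\F_q}$ with $\bar x\in\{\infty,0\}$.

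It then remains to apply the Atkin--Lehner involution $w_p$, which is defined over $\Q$ and interchanges the cusps $\infty$ and $0$. Being $\Q$-rational, $w_p$ commutes with the Galois conjugation defining $y$ and hence acts coordinatewise on $X_0^{(d)}(p)(\Q)$, carrying $y$ to the $\Q$-point $(w_p(x)^{\sigma_1},\dots,w_p(x)^{\sigma_d})$. If $\bar x=\infty$ we take the identity, and if $\bar x=0$ we apply $w_p$; in either case the resulting point reduces to $(\infty,\dots,\infty)_{\F_q}$, as required.

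The step I expect to be the crux is the first one: justifying rigorously, via the Deligne--Rapoport description of $\cX_0(p)$ together with the Tate-curve computation, that multiplicative reduction of $E$ at $\fq$ forces $\bar x$ to be a cusp, and pinning down which cusp it is in terms of the reduction of $C$. Once that is in place, the remaining Frobenius bookkeeping is essentially formal, relying only on total inertia of $q$ and the $\Q$-rationality of both cusps and of $w_p$.
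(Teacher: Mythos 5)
Your proof is correct and takes essentially the same route as the paper's: multiplicative reduction forces the reduction of the modular point to be a cusp, total inertness of $q$ together with the $\Q$-rationality of the two cusps forces every Galois conjugate to reduce to the \emph{same} cusp, and the Atkin--Lehner involution $w_p$ disposes of the case where that cusp is $0$. The paper compresses all of this into a single sentence (the word ``obviously''), so your write-up is just a properly detailed version of its argument.
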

	\begin{proof}
Let %$\fq$ be a prime above $q$ and
$x\in X_0(p)(K)$ be a point representing the pair $(E,C)$, where $C$ is a $K$-rational subgroup of order $p$ of $E$. Denote by $\widetilde{E}$ and $\widetilde{C}$ the reductions of $E$ and $C$ modulo $\fq$, respectively.

Since $q$ is totally inert in $K$, $y$ obviously reduces either to $(\infty,\ldots, \infty)_{\F_q}$, in which case we are done, or $(0,\ldots, 0)_{\F_q}$ after which we apply an Atkin-Lehner involution to obtain the desired result.

 %Suppose now $q$ is not totally inert, so by assumption we have $p\nmid (q^{f(\fq/q)}-1)$ for all primes $\fq$ above $q$. We follow the strategy of \cite[Lemma 3.2]{Kamienny-IMRN} and claim that the reduction $\widetilde x$ of $x$ is the cusp $0$. To show this it is enough to show that the whole group $\widetilde C$ does not specialize into the identity component $\widetilde E_0$. This will imply that $\widetilde{x}$ represents a N\'eron $p$-gon and hence it must be the cusp $0$. The $\F_\fq$-rational points on the identity component $\widetilde E_0$ are isomorphic to a subgroup of $\F_\fq^\times$. From the assumptions on $p$ and $q$, we see that $\#\F_\fq^\times$ is not divisible by $p$, so it cannot have a subgroup of order $p$, proving the claim.

%We just showed that $y\in X_0^{(d)}(p)(\Q)$ reduces to $(0,\ldots, 0)_{\F_q}$, so after applying an Atkin-Lehner involution we hit $(\infty,\ldots, \infty)_{\F_q}$ (the fact that one should apply an Atkin-Lehner involution to map the reduction into $(\infty,\ldots, \infty)$ modulo $\fq$ is explained in the remark after \cite[Proposition A.4]{Derkamstoll}.)	

%Now the proof proceeds exactly as in \cite[Lemma 3.2]{Kamienny-IMRN}, after in loc. cit. one proves that the elliptic curve has multiplicative reduction modulo $\fq$ and all the Galois conjugates of $\fq$.

	\end{proof}
	
	Let $J_0^e(p)$ be the winding quotient of $J_0(p)$, as defined in \cite{MerelB}.
	
	\begin{theorem}[{Merel, \cite[Proposition 1]{MerelB}}]
		The rank of the winding quotient $J_0^e(p)(\Q)$ is 0.
	\end{theorem}
	
	%TODO: give a definition of a formal immersion. not necessary!
	
	Define $f_d:X_0^d(p)\rightarrow J_0^e$ to be the composition of the natural map $$X_0^d(p)\rightarrow J_0(p)$$
	$$ \quad (\alpha_1, \ldots, \alpha_d)\mapsto \left[\sum_{i=1}^{d}\alpha_i-d\infty\right]$$
	and the quotient map $J_0(p)\rightarrow J_0^e(p)$.
	
	\begin{theorem}[Parent]\label{propd}
		Suppose $d>2$, $q>d-1$ and $p> 65 (2d)^6$. Then the map $f_d:X_0^d(p)\rightarrow J_0^e(p)$ is a formal immersion at $(\infty, \ldots, \infty)_{\F_q}.$
	\end{theorem}
	\begin{proof}
		This follows from \cite[Theorem 4.18 and Section 5]{Parent} (see also \cite[Appendix A]{Derkamstoll}).
	\end{proof}

	For $d=2$ we can use the Mazur's Eisenstein quotient and Kamienny's results instead, as these are more explicitly stated. Using Eisenstein's quotient, we will see that the results we get are the best possible.
	
	Let $J_p$ be the Eisenstein quotient of $J_0(p)$ (see \cite{maz-orig} for the definition).
	
	\begin{theorem}[Mazur, {\cite[Theorem 4]{Mazur}}]
		The rank of $J_p(\Q)$ is zero.
	\end{theorem}
	
	Define now $f_d$ to be as defined before, with the difference that the quotient map maps to $J_p$ instead of $J_0^e(p)$.
	
	\begin{theorem}[{Kamienny, \cite[Proposition 3.2]{Kamienny1992}}] \label{prop2}
		Let $d=2$ and $q>5$, $p>71$. Then $f_d$ is a formal immersion at $(\infty,\infty)_{\F_\fq}$.
	\end{theorem}
	
	We now have all the ingredients needed to prove the theorems.
	
	\begin{proof}[Proof of Theorems \ref{mainquad} and \ref{maind}]
		To prove our claims, we use the following standard argument. Suppose that an elliptic curve $E$ with a $p$-isogeny (where $p$ satisfies the assumptions of the theorems) over a number field $K$ has multiplicative reduction at the unique prime $\fq$ of $K$ above the totally inert rational prime $q$. It corresponds to a non-cuspidal point $x\in X_0(p)(K)$. Let $y=(x^{\sigma_1}, \ldots, x^{\sigma_d})\in X_0^{(d)}(p)(\Q)$.
		
		The map $f_d$ is a formal immersion at $(\infty,\ldots, \infty)_{\F_q}$ by Theorems \ref{propd} and \ref{prop2}. It follows that there is at most one element in $X_0(p)^{(d)}(\Q)$ that reduces modulo $q$ to $(\infty,\ldots, \infty)_{\F_q}$ (see for example \cite[Lemma 3.1]{Derkamstoll}). Since we know that $\left(\infty,\ldots, \infty\right)\in X_0^{(d)}(p)(\Q)$ reduces to $(\infty,\ldots, \infty)_{\F_q}$ and, by Proposition \ref{prop-red}, we have that $y$ reduces to $(\infty,\ldots, \infty)_{\F_q}$ (after possibly applying an Atkin-Lehner involution), we conclude that $y=\left(\infty,\ldots, \infty\right)$. This contradicts the hypothesis that $x \in X_0(p)(K)$ is non-cuspidal.
	\end{proof}
	
\begin{comment}	
\begin{remark}
		We claim that in Theorem \ref{mainquad}, the bound of $71$ is best possible. To see this, note that $X_0(71)$ is a hyperelliptic curve with hyperelliptic involution $w_{71}$. Hence we obtain the quotient map $X_0(71)\rightarrow X_0^+(71)$ with $X_0^+(71)$ being of genus $0$. Note that in \cite{bn} it is proved that all elliptic curves with 71-isogenies over quadratic fields are obtained in this way. Hence we have that the $j$-function $j_{71}\in \Q(X_0(71))$ is quadratic (over $\Q(X_0^+(71))\simeq \Q(t)).$ So there exists a quadratic polynomial $f\in \Q(t)[y]$ such that $f(j_{71}(t))=0$. All the $j$-invariants $j_E$ of elliptic curves with $71$-isogenies over quadratic fields are obtained as roots of this polynomial (when specialized in $t$). Finally, one can make the denominator of $j_E(t)$ divisible by an arbitrarily large inert prime by taking an appropriate $t$.
	\end{remark}
\end{comment}
	\section{Mod $p$ Galois representations and the proof of Theorem \ref{thm:application}}
	
	Before giving the proof for our Diophantine result, let us bring to the reader's attention the following theorem of Larson and Vaintrob.
	
	\begin{theorem}[\cite{LarVain13Jus}*{Theorem 1}] \label{larvain} Let $K$ be a number field. There exists a finite set of primes $M_K$, depending only on $K$, such that for any prime $p \notin M_K$ and any elliptic curve $E/K$ for which $\rhobar_{E,p} \otimes \overline{\F}_p \sim \left( \begin{array}{cc} \lambda & * \\ 0 & \lambda' \end{array} \right)$ where $\lambda, \lambda' : G_K  \to \overline{\F}_p^{\times}$ are characters, one of the following happens.
		\begin{enumerate}		
			\item There exists a CM elliptic curve $E'/K$ , whose CM field is contained in $K$, with $\rhobar_{E',p}\otimes \overline{\F}_p \sim \left( \begin{array}{cc} \theta& 0 \\ 0 & \theta' \end{array} \right) $ and such that $\lambda^{12} = \theta^{12}$.
			\item The Generalized Riemann Hypothesis fails for $K(\sqrt{-p})$, and $\lambda^{12}= \chi_p^6$, where $\chi_p: G_K \to \F_p^{\times}$ is the mod $p$ cyclotomic character. Moreover, in this case $\rhobar_{E,p}$ is already reducible over $\F_p$ and $p \equiv 3 \pmod{4}$.
		\end{enumerate}
	\end{theorem}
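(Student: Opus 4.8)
The plan is to work directly with the two characters $\lambda,\lambda'$ on the diagonal, using that $\det \rhobar_{E,p} = \chi_p$ forces the relation $\lambda\lambda' = \chi_p$; thus everything is governed by the single \emph{isogeny character} $\lambda$, and the goal becomes to show that $\lambda$ is pinned down — up to its $12$th power, which is all the conclusion sees — to agree with the isogeny character of a CM curve, except in one exceptional analytic situation. I would first recall Serre's description of such characters: for a prime $v\nmid p$ of good reduction, $\lambda(\Frob_v)$ is the reduction of one of the two Frobenius eigenvalues at $v$, and $\lambda(\Frob_v)\lambda'(\Frob_v) = \mathrm{Nm}(v) \bmod p$. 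The whole argument is then a study of the arithmetic of $\lambda$.

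The first technical step is a ramification analysis showing that $\lambda^{12}$ is unramified outside $p$. Away from $p$ the only obstruction is bad reduction: at a prime of potentially multiplicative reduction a Tate-curve analysis (together with the possibly ramified quadratic twist relating the non-split and split cases) shows that $\lambda^2|_{I_v}$ is trivial for $v\nmid p$, while at a prime of additive reduction $E$ acquires semistable reduction over an extension of degree dividing $12$, so $\lambda^{12}$ trivialises on inertia there. The second, more delicate step is to pin down $\lambda^{12}|_{I_{\fp}}$ for $\fp\mid p$ using Serre's theory of fundamental characters applied to a curve carrying a $p$-isogeny: in the ordinary case $\lambda|_{I_\fp}$ is $\chi_p^0$ or $\chi_p^1$ up to an unramified twist, and in the supersingular case it is a level-$2$ fundamental character; raising to the $12$th power gives $\lambda^{12}|_{I_\fp} = \chi_p^{12 a_\fp}$ with the exponents $a_\fp$ lying in a fixed finite set, and summing over the embeddings of $K$ yields a global constraint.

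Combining these, $\lambda^{12}$ is a character of $G_K$ whose conductor is supported at $p$ with bounded exponent, so it factors through a ray class group whose order is controlled \emph{independently of $p$} once $p$ is large. Class field theory then exhibits $\lambda^{12}$ as a twist of a character coming from a Hecke character of $K$, and the candidates of this shape are exactly those produced by CM elliptic curves whose CM field sits inside $K$. Comparing $\lambda$ against this finite list is where the dichotomy is born: either $\lambda^{12} = \theta^{12}$ for one of these CM characters, which is conclusion (1), or the numerics match only a would-be CM character attached to the field $K(\sqrt{-p})$, which cannot literally embed in $K$ for large $p$.

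I expect the genuine obstacle to be this last, analytic, step. To exclude the spurious match one needs an \emph{effective} comparison of $\lambda$ with CM characters: via an effective Chebotarev density theorem one finds small-norm primes whose Frobenius traces separate the two candidates, and controlling the least such prime in terms of $K$ alone — not $p$ — is precisely what forces the finite exceptional set $M_K$. The candidate attached to $K(\sqrt{-p})$ survives exactly when $L(s,\chi_{-p})$ has an exceptional (Siegel-type) zero, i.e.\ when GRH for $K(\sqrt{-p})$ fails; tracking this scenario produces the stated relation $\phi^{12} = \chi_p^6$, and the arithmetic of $\Q(\sqrt{-p})$ — whose discriminant is $-p$ exactly when $p\equiv 3\pmod 4$ — forces both $p\equiv 3\pmod 4$ and the descent of reducibility to $\F_p$, giving conclusion (2). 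Making every constant in this analytic input depend only on $K$, uniformly in $p$, is the crux of the whole argument.
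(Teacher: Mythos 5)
This statement is not proved in the paper at all: it is quoted verbatim, with attribution, as Theorem~1 of Larson--Vaintrob \cite{LarVain13Jus}, and is used purely as a black box in the proof of Theorem~\ref{thm:application}. So there is no proof of the paper's own to compare yours against; the only meaningful benchmark is Larson--Vaintrob's published argument.

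Measured against that argument, your outline does track its broad architecture: the reduction to the isogeny character $\lambda$ (with $\lambda\lambda' = \chi_p$ from the determinant), the proof that $\lambda^{12}$ is unramified away from $p$ via Tate curves and semistable reduction over extensions of degree dividing $12$, the computation of $\lambda^{12}$ on inertia above $p$ via fundamental characters, and the use of effective Chebotarev to compare $\lambda$ with a finite list of candidates. But as a proof it has genuine gaps. First, the step ``class field theory then exhibits $\lambda^{12}$ as a twist of a character coming from a Hecke character of $K$, and the candidates of this shape are exactly those produced by CM elliptic curves whose CM field sits inside $K$'' is the mathematical heart of Larson--Vaintrob's paper, and you assert it rather than argue it: one must show that the inertia exponents at the primes above $p$ define a CM type which descends to an imaginary quadratic subfield of $K$, and then actually construct a CM elliptic curve $E'/K$ (not merely an abstract Hecke character) with $\lambda^{12} = \theta^{12}$. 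The ``balanced'' case, in which the exponents force $\lambda^{12}$ to look like $\chi_p^{6}$, is precisely the case where no such quadratic subfield exists, and this is what becomes conclusion (2) --- your sketch never isolates this case analysis, which is where the dichotomy actually comes from. Second, your analytic characterization of conclusion (2) is wrong in substance: in Larson--Vaintrob the failure of GRH for $K(\sqrt{-p})$ enters because the effective Chebotarev bound (Lagarias--Odlyzko) used to produce a small auxiliary prime requires GRH for that field; it is \emph{not} equivalent to the existence of a Siegel-type exceptional zero of $L(s,\chi_{-p})$, so ``survives exactly when there is an exceptional zero'' misstates the dichotomy and would yield a different (weaker) theorem. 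Finally, the assertions closing case (2) --- that $\rhobar_{E,p}$ is then already reducible over $\F_p$ and that $p \equiv 3 \pmod{4}$ --- are again stated as consequences of ``the arithmetic of $\Q(\sqrt{-p})$'' without the supporting argument.
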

	
We can now start proving Theorem \ref{thm:application}. The quadratic imaginary fields of class number one are $K=\mathbb Q(\sqrt{-d})$, where $d$ is one of 1, 2, 3, 7, 11, 19, 43, 67 or 163. The cases $d \in \{1,2,7 \}$ follow by the more general results in \cite{turcas2018}. From now on we will assume that $K$ is one of the six remaining fields in which the prime $2$ is inert.
	
	Suppose that a triple $a,b,c \in \cO_K \setminus \{0 \}$ which satisfies the hypothesis (2) and (3) of Theorem \ref{thm:application} exists and assume that $p \geq 19$.  From Theorem \ref{2divabc}, we know the following:
	\begin{itemize}
		\item The prime ideal $\mathfrak q = 2\cO_K$ does not divide $abc$.
		\item If $E: Y^2=X(X-a^p)(X+b^p)$ is the Frey elliptic curve
		attached to this solution,  then $E$ has additive potentially good reduction at $\mathfrak q := 2 \cO_K$ and $E$ is semistable at every prime ideal $\mathfrak q' \neq \mathfrak q$.
		\item From the proof of the aforementioned theorem, it follows that the mod $p$ Galois representation $\rhobar_{E,p}$ is absolutely reducible and unramified outside the primes above $2p$.
	\end{itemize}
	
	Let us assume that $C_{S} \geq 53$. In this case, it follows from \cite{Serre81}*{Lemme 18'} that the image of $\rhobar_{E,p}(G_K)$ under the projection $\GL_2(\F_p) \to \PGL_2(\F_p)$ contains an element whose order is at least 13. We remark that, although Lemme 18' in loc. cit. is stated for $K=\Q$, its proof remains valid in the setting of general number fields $K$, as long as $p$ does not ramify in $K$. This guarantees that the image of $\rhobar_{E,p}(G_K)$ in $\PGL_2(\F_p)$ does not lie in any of its exceptional subgroups $A_4$, $S_4$ or $A_5$ (see \cite{serre72}*{Subsection 2.5}).
	
	From the discussion in \cite{serre72}*{Section 2} we deduce that if $\rhobar_{E,p} \otimes_{\F_p} \overline{\F}_p$ is diagonalisable, then the image $\rhobar_{E,p}(G_K)$ is contained in a Cartan subgroup of $\GL_2(\F_p)$, distinguishing the following two cases.
	
	Suppose first that $\rhobar_{E,p}$ is irreducible, but absolutely reducible. In this situation, it follows that the image $\rhobar_{E,p}(G_K)$ is contained in a Cartan non-split subgroup. We have that
	$\rhobar_{E,p} \otimes_{\F_p} \overline{\F}_p \sim \left( \begin{array}{cc} \lambda & 0 \\ 0 & \lambda^p \end{array} \right),$
	where $\lambda : G_K \to \mathbb F_{p^2}^{\times}$ is a character. The latter is not $\mathbb F_p$-valued and $\lambda^{p+1} = \chi_p$, where $\chi_p : G_K \to \F_p^{\times}$ is the mod $p$ cyclotomic character.
	
	Let us write $I_{\mathfrak q}$ for the inertia subgroup of $G_K$ corresponding to $\mathfrak q$. From \cite{AFreitasSiksek}*{Lemma 3.7} we know that 3 divides the order of $\rhobar_{E,p}(I_{\mathfrak q})$.  There exists an element $g \in I_{\mathfrak q}$ such that $\rhobar_{E,p}(g)$ has order exactly $3$ by Cauchy's theorem. This implies that $\lambda(g)$ has order $3$ in $\F_{p^2}^{\times}$. On the other hand, it is known that $\chi_p$ is unramified outside the places above $p$ and since $g \in I_{\mathfrak q}$, we have $\lambda^{p+1}(g)=1$. The latter implies that $3 \mid p+1$ and this, combined with the hypothesis of our theorem, gives that $p \equiv 3 \pmod{4}$ and that it must split in $K$.
	
	Let us now apply Theorem \ref{larvain} to our Frey curve $E$.
	To make sure the hypothesis of the aforementioned theorem is satisfied, we will assume that $C_{S}$ and hence $p$ is larger than any of the primes in $M_K$, for all fields $K$ under discussion. As we assume that $\rhobar_{E,p}$ is not reducible but only absolutely reducible, the first case of Theorem \ref{larvain} applies.
	
	There is an elliptic curve $E'/K$ with CM by an order in $K$ such that $$\rhobar_{E',p} \otimes \overline{\F}_p \sim \left( \begin{array}{cc} \theta & 0 \\ 0 & \theta' \end{array} \right) \text{ and }\lambda^{12} = \theta^{12}.$$ We recall that as $p$ is supposed to be split in $K$ from the theory of elliptic curves with CM, we know that the image $\rhobar_{E',p}(G_K)$ is contained inside a split Cartan subgroup. The character $\theta: G_K \to \overline{\mathbb F}_p^{\times}$ is in fact $\mathbb F_p$-valued. 	
	
	If needed, we now increase $C_{S}$ such that $C_{S}> 163$ to ensure that $p$ is unramified in $K$. Let $\mathfrak p$ be any of the two primes lying above $p$. From \cite{LarVain13Jus}*{Remark 1.1} we know that the character $\lambda \theta^{-1}$ is unramified outside the primes of additive reduction of $E$, in particular $\lambda \theta^{-1}$ is unramified at $\mathfrak p$. In other words, we have $\lambda\vert_{I_{\mathfrak p}} = \theta\vert_{I_{\mathfrak p}}$, where $I_{\mathfrak p}$ is the inertia subgroup of $\mathfrak p$. Recall that $\lambda^{p+1} = \chi_p$, hence $\theta^{p+1}\vert_{I_{\mathfrak p}} = \chi_p\vert_{I_{\mathfrak p}}$.
	
	On one hand, $\theta$ is $\F_p$-valued, so its order divides $p-1$. We deduce that
	$$\chi_p^{(p-1)/2}\vert_{I_{\mathfrak p}} = \left(\theta^{p-1} \right)^{(p+1)/2} \vert_{I_{\mathfrak p}} =1.$$
	On the other hand, $p$ is unramified in $K$ so the character $\chi_p\vert_{I_{\mathfrak p}}$ surjects on $\mathbb F_p^{\times}$, therefore its order is $p-1$, giving a contradiction.

	We just showed that if $\rhobar_{E,p}$ is not absolutely irreducible, then it is reducible. Recall that $S$ is a finite set of rational primes containing $2$, $3$ and $5$. The hypothesis (3) of our theorem implies that if $l \notin S$ then $E$ has the same type of reduction, good or multiplicative, at all prime ideals $\mathfrak l$ above $l$. Moreover, (3) guarantees that if $\mathfrak l \mid l$ is a prime of multiplicative reduction for $E$, then $l$ is inert in $K$. %or $p \nmid (l^{f(\mathfrak{l}/l)}-1)$.
From Theorem \ref{mainquad} it follows that the reduction is good at such primes $\mathfrak l$. This means that $j(E)$ is integral outside of $S$, in particular that $abc$ is supported only on primes lying above the ones in $S$.
	
	The Fermat equation can be written as $(-a/c)^p+ (-b/c)^p=1$. Observe that $(-a/c)^p$ and $(-b/c)^p$ are solutions to the $S$-unit equation
	\begin{equation} \label{Sunit}
	x+ y=1, \text{ where } x,y \in \cO_{K,S}^{\times}.
	\end{equation}
	
	Due to the famous Siegel's theorem, we know that \eqref{Sunit} has finitely many solutions. Suppose that one of $a$, $b$ or $c$ is not a unit. Without losing generality, we can assume that $a$ is divisible by some prime ideal $\mathfrak l$ of $K$. As \eqref{Sunit} has finitely many solutions, the possible valuations of $v_{\mathfrak l}(x)$ belong to a finite set. A contradiction can be reached by increasing the exponent $p$ and this shows that there is a bound $B_{K,S}$ such that if $p> B_{K,S}$ then $a,b,c$ are units in $\cO_K$.
	
	Let us now take $C_{S}$ to be larger than all the constants $B_{K,S}$. For each one of the number fields $K$, there are finitely many units in $\cO_K$. Checking all the possibilities we find that if $p$ and $a,b,c$ are as in the hypothesis of our theorem, the only possible solutions arise when $K= \mathbb Q(\sqrt{-3})$ and these are $(a,b,c)=(1,\epsilon,\epsilon^2)$, up to reordering.
	
	\medskip
	
	\textbf{Acknowledgements}
We are grateful to the anonymous referee for suggesting many improvements to the presentation and for pointing out a simplification to the proof of Theorem 1.5. We also thank Samuel Le Fourn, Nuno Freitas and Philippe Michaud-Rodgers for some clarifications and helpful comments.

	\bibliographystyle{amsplain}
	\bibliography{perf-pow}

\end{document}